\newcommand{\scal}[2]{\langle #1,#2\rangle}
\newcommand{\rd}{\mathbf R^d}
\newcommand{\rr}[1]{\mathbf R^{#1}}
\newcommand{\nn}[1]{\mathbf N^{#1}}
\newcommand{\cc}[1]{\mathbf C^{#1}}
\newcommand{\nm}[2]{\Vert #1\Vert _{#2}}
\newcommand{\cdo}{\, \cdot \, }
\newcommand{\norm}[1]{\Vert#1\Vert }
\newcommand{\vrum}{\vspace{0.1cm}}
\newcommand{\maclH}{\mathcal H}
\newcommand{\maclS}{\mathcal S}
\newcommand{\mascP}{\mathscr P}
\newcommand{\mascS}{\mathscr S}
\numberwithin{equation}{section}          
\newtheorem{thm}{Theorem}
\numberwithin{thm}{section}
\newtheorem{prop}[thm]{Proposition}
\newtheorem{lemma}[thm]{Lemma}
\theoremstyle{definition}
\theoremstyle{remark}
\newtheorem{rem}[thm]{Remark}              
\author{Ahmed Abdeljawad}
\address{Johann Radon Institute for Computational
and Applied Mathematics,
Austrian Academy of Sciences,
Linz, Austria.}
\email{ahmed.abdeljawad@ricam.oeaw.ac.at}
\author{Carmen Fern\'andez}
\address{Departament d' An\`{a}lisi Matem\`{a}tica, Universitat
de Val\`{e}ncia, Valencia, Spain}
\email{fernand@uv.es}
\author{Antonio Galbis}
\address{Departament d' An\`{a}lisi Matem\`{a}tica, Universitat
de Val\`{e}ncia, Valencia, Spain}
\email{antonio.galbis@uv.es}
\author{Joachim Toft}
\address{Department of Mathematics,
Linn{\ae}us University, V{\"a}xj{\"o}, Sweden}
\email{joachim.toft@lnu.se}
\author{R{\"u}ya {\"U}ster}
\address{Department of Mathematics, Istanbul University, Istanbul, Turkey}
\email{ruya.uster@istanbul.edu.tr}
\thanks{C. Fern\'andez and A. Galbis were partially supported
by the projects  MTM2016-76647-P, ACOMP/2015/186 (Spain).}
\title{Characterizations of a class of Pilipovi{\'c} spaces by
powers of harmonic oscillator}
\begin{document}

\begin{abstract}
We show that a smooth function $f$ on $\rr d$ belongs to
the Pilipovi{\'c} space
$\maclH _{\flat _\sigma}(\rr d)$ or the Pilipovi{\'c} space
$\maclH _{0,\flat _\sigma}(\rr d)$, if and only if the $L^p$ norm of
$H_d^Nf$ for $N\ge 0$, satisfy certain types of estimates. Here
$H_d=|x|^2-\Delta _x$ is the harmonic oscillator.
\end{abstract}

\keywords{Harmonic oscillator, Pilipovi{\'c} spaces}

\subjclass{46F05, 42B35, 30Gxx, 44A15}

\maketitle

\section{Introduction}\label{sec0}

\par

In the paper we characterize Pilipovi{\'c} spaces of the form
$\maclH _{\flat _\sigma}(\rr d)$ and $\maclH _{0,\flat _\sigma}
(\rr d)$, considered in \cite{FeGaTo1,Toft14},
in terms of estimates of powers of the harmonic oscillator, 
on the involved functions.

\par

The set of Pilipovi{\'c} spaces is a family of Fourier
invariant spaces, containing any Fourier invariant (standard)
Gelfand-Shilov space. The (standard) Pilipovi{\'c} spaces
$\maclH_s(\rr d)$ and $\maclH_{0,s}(\rr d)$
with respect to $s\in \mathbf R_+$, are the sets of all formal
Hermite series expansions
\begin{equation}\label{Eq:fHermite}
  f(x) = \sum _{\alpha \in \nn d}c_\alpha (f)h_\alpha (x)
\end{equation}
such that
\begin{equation}\label{Eq:Cond.}
|c_\alpha(f)| \lesssim e^{-r|\alpha |^{\frac 1{2s}}}
\end{equation}
holds true for some $r>0$ respective for every $r>0$. 
Here $f(\theta )\lesssim g(\theta )$ means that
$f(\theta )\le cg(\theta)$ for some
constant $c>0$ which is independent of $\theta$ in the
domain of $f$ and $g$. (See also \cite{Ho}
and Section 1 for notations.) Evidently, $\maclH_s(\rr d)$ and $\maclH_{0,s}(\rr d)$
increases with $s$. It is proved in \cite{Pil2} that if $\maclS_s(\rr d)$ and
$\Sigma_s(\rr d)$ are the Gelfand-Shilov spaces of Roumieu respective
Beurling type of order $s$, then
\begin{alignat}{2}
\maclH_s(\rr d) &= \maclS_s(\rr d),& \quad s &\ge \frac 12,
\label{Eq:Cond2}
\\[1ex]
\maclH_{0,s}(\rr d) &= \Sigma_s(\rr d),& \quad s &> \frac 12,
\label{Eq:Cond3}
\end{alignat}
and
$$
\maclH_{0,s}(\rr d)\neq \Sigma_s(\rr d)=\{0\},\quad s=\frac 12.
$$

\par

It is also well-known that $\maclS_s(\rr d)=\{0\}$ when $s<\frac 12$ 
and $\Sigma_s(\rr d)=\{0\}$ when $s\le\frac 12$.
These relationships are completed in \cite{Toft14} by the relations
\begin{alignat*}{2}
\maclH_s(\rr d) &\neq \maclS_s(\rr d) = \{0\},& \quad s&<\frac 12
\intertext{and}
\maclH_{0,s}(\rr d) &\neq \Sigma_s(\rr d)=\{0\},& \quad s &\le\frac 12.
\end{alignat*}
In particular, each Pilipovi\'c space is contained in the
Schwartz space $\mascS (\rr d)$.

\par

For $\maclH_s(\rr d)$ ($\maclH_{0,s}(\rr d)$) we also have the characterizations
\begin{equation}\label{Eq:CharPowerHarmOsc}
  f\in \maclH_s(\rr d)\quad (f\in \maclH_{0,s}(\rr d))\quad  \Leftrightarrow \quad
  \norm {H^{N}_{d} f}_{L^\infty}\lesssim r^N N!^{2s}
\end{equation}
for some $r>0$ (for every $r>0$) concerning estimates of
powers of the harmonic oscillator
$$
H_d=|x|^2-\Delta _x,\qquad x\in \rr d,
$$
acting on the involved functions. These
relations were obtained in \cite{Pil2}
for $s\ge \frac 12$, and in \cite{Toft14} in the general case $s>0$.

\par

In \cite{FeGaTo1,Toft14} characterizations of $\maclH_s(\rr d)$ and $\maclH_{0,s}(\rr d)$
were also obtained by certain spaces of analytic functions on $\cc d$,
via the Bargmann transform. From these mapping properties it follows that near
$s=\frac 12$ there is a jump concerning these Bargmann images. More precisely,
if $s=\frac 12$, then the Bargmann image of $\maclH _{s}(\rr d)$
(of $\maclH _{0,s}(\rr d)$) is the set of all entire functions $F$ on $\cc d$ such that
$F$ obeys the condition
\begin{equation}\label{Eq:AnalFEst1}
|F(z)|\lesssim e^{(\frac 12-r)|z|^2}
\qquad
(\, |F(z)|\lesssim e^{r|z|^2}\, )
\end{equation}
for some $r>0$ (for every $r>0$). For $s<\frac 12$,
this estimate is replaced  by
\begin{equation}\label{Eq:AnalFEst2}
|F(z)| \lesssim e^{r(\log (1+|z|))^{\frac 1{1-2s}}}
\end{equation}
for some $r>0$ (for every $r>0$), which is indeed a stronger condition compared
to the case $s=\frac 12$.

\par

An important motivation for considering the spaces 
$\maclH_{\flat_\sigma}(\rr d)$ and $\maclH_{0,\flat_\sigma}(\rr d)$ 
is to make this gap smaller. More precisely, 
$\maclH_{\flat_\sigma}(\rr d)$ and $\maclH_{0,\flat_\sigma}(\rr d)$, 
which are Pilipovi\'c spaces of Roumieu respective
Beurling type, is a family of function spaces, which increases with 
$\sigma$ and such that
$$
\maclH _{s_1}(\rr d)
\subseteq
\maclH_{0,\flat_\sigma}(\rr d)
\subseteq
\maclH_{\flat_\sigma}(\rr d)
\subseteq
\maclH _{0,s_2}(\rr d),
\qquad s_1<\frac 12,\ s_2\ge \frac 12.
$$
The spaces $\maclH_{\flat_\sigma}(\rr d)$ and
$\maclH_{0,\flat_\sigma}(\rr d)$ consist of all 
formal Hermite series expansions
\eqref{Eq:fHermite} such that
\begin{equation} \label{Eq:Cond.3}
|c_\alpha (f)|\lesssim r^{|\alpha|}\alpha!^{-\frac 1{2\sigma}}
\end{equation}
hold true for some $r>0$ respectively for every $r>0$. For the Bargmann images of
$\maclH_{\flat_\sigma}(\rr d)$ and $\maclH_{0,\flat_\sigma}(\rr d)$, the conditions
\eqref{Eq:AnalFEst1} and \eqref{Eq:AnalFEst2} above are replaced by
$$
|F(z)|\lesssim e^{r|z|^{\frac {2\sigma}{\sigma +1}}},
$$
for some $r>0$ respectively for every $r>0$. It follows that the gaps of the Bargmann
images of $\maclH _s(\rr d)$ and $\maclH _{0,s}(\rr d)$ between the cases $s<\frac 12$
and $s\ge \frac 12$ are drastically decreased by including the spaces
$\maclH_{\flat_\sigma}(\rr d)$ and $\maclH_{0,\flat_\sigma}(\rr d)$, $\sigma >0$, in the
family of Pilipovi{\'c} spaces.

\par

In \cite{FeGaTo1}, characterizations of
$\maclH_{\flat_1}(\rr d)$ and $\maclH_{0,\flat_1}(\rr d)$ in terms of
estimates of powers of the harmonic oscillator acting
on the involved functions which corresponds to
\eqref{Eq:CharPowerHarmOsc} are deduced.
On the other hand, apart from the case $\sigma =1$, it seems that no
such characterizations for $\maclH_{\flat_\sigma}(\rr d)$ and
$\maclH_{0,\flat_\sigma}(\rr d)$ have been obtained so far.

\par

In Section \ref{sec2} we fill this gap in the theory, and
deduce such characterizations. In particular, as a consequence
of our main result, Theorem \ref{Thm:Mainthm1} in Section \ref{sec2}, we have
\begin{gather*}
f\in \maclH _{\flat _\sigma }(\rr d) \quad
(f\in \maclH _{0,\flat _\sigma }(\rr d))
\\[1ex]
\Leftrightarrow
\\[1ex]
\norm {H_d^N f}_{L^\infty} \lesssim 2^N r^{\frac{N}{\log (N\sigma )}}
\left ( \frac{2N\sigma}{\log (N\sigma )} \right )^{N ( 1-\frac{1}{\log (N\sigma )} ) } 
\end{gather*}
for some (every) $r > 0$.
By choosing $\sigma =1$ we regain the corresponding characterizations in
\cite{FeGaTo1} for $\maclH_{\flat_1}(\rr d)$ and $\maclH_{0,\flat_1}(\rr d)$.

\par

\section{Preliminaries}\label{sec1}

\par

In this section we recall some facts about Gelfand-Shilov spaces,
Pilipovi{\'c} spaces and modulation spaces.

\par

Let $s>0$. Then the (Fourier invariant) Gelfand-Shilov spaces $\maclS _s(\rr d)$
and $\Sigma _s(\rr d)$ of Roumieu and Beurling type, respectively,
consist of all $f\in C^\infty (\rr d)$ such that
\begin{equation}\label{Eq:GSNorm}
\nm f{\maclS _{s;r}}\equiv
\sup _{\alpha ,\beta \in \nn d}
\left (
\frac {\nm {x^\alpha D^\beta f}{L^\infty (\rr d)}}
{r^{|\alpha +\beta|}(\alpha !\beta !)^s}
\right )
\end{equation}
is finite, for some $r>0$ respectively for every $r>0$. The
topologies of $\maclS _s(\rr d)$ and $\Sigma _s
(\rr d)$ are the inductive limit topology and the
projective limit topology, respectively,
supplied by the norms \eqref{Eq:GSNorm}.
We refer to \cite{ChuChuKim,GS}
for more facts about Gelfand-Shilov spaces.

\par

For $\maclH _s(\rr d)$ and $\maclH _{0,s}(\rr d)$ we consider the norms
\begin{alignat*}{3}
  \nm f{\maclH _{s;r}}
  &\equiv
  \sup\limits_{\alpha\in \nn d}
  (|c_\alpha(f)|e^{r|\alpha|^{\frac 1{2s}}}) &
  \quad &\text{when} & \quad s &\in \mathbf R_+
  \intertext{and}
  \nm f{\maclH _{s;r}} &\equiv \sup \limits _{\alpha\in \nn d}
  \left(
  |c_\alpha(f)| r^{-|\alpha|}\alpha!^{\frac1{2\sigma}}
  \right) &
  \quad &\text{when} & \quad s &= \flat_\sigma ,
\end{alignat*}
when $r>0$ is fixed. Then the set $\maclH _{s;r}(\rd)$ consists 
of all $f\in C^\infty (\rr d)$ such that $\nm f{\maclH _{s;r}}$
is finite. It follows that $\maclH _{s;r}(\rd)$ is a Banach 
space.

\par

The Pilipovi{\'c} spaces $\maclH _s(\rd)$ and $\maclH _{0,s}(\rd)$ 
are the inductive limit and the projective limit, respectively, of 
$\maclH _{s;r}(\rd)$ with respect to $r>0$. In particular,
$$
\maclH _s(\rd)=\bigcup_{r>0}\maclH _{s;r}(\rd)\quad \text{and}
\quad \maclH _{0,s}(\rd)=\bigcap_{r>0}\maclH _{s;r}(\rd)
$$
and it follows that $\maclH _s(\rd)$ is complete, 
and that $\maclH _{0,s}(\rd)$ is a Fr\'echet space. It is well-known 
that the identities \eqref{Eq:Cond2} and \eqref{Eq:Cond3} also hold 
in topological sense (cf. \cite{Pil2}).

\par

By extending $\mathbf R_+$ into $\mathbf R_\flat\equiv\mathbf R_+
\cup \{\flat_\sigma\}_{\sigma>0}$ and letting
$$
s_1< \flat_{\sigma _1} < \flat_{\sigma _2} <s_2 \quad \text{when}\quad s_2 \ge \frac 12,
\ s_1<\frac 12\ \text{and}\ \sigma _1<\sigma _2,
$$
we have
$$
\maclH _{s_1}(\rr d)\subseteq \maclH _{0,s_2}(\rr d)
\subseteq \maclH _{s_2}(\rr d),\quad s_1,s_2\in\mathbf R_\flat\;
\text{and}\; s_1<s_2.
$$

\medspace

We also need some facts about weights and modulation spaces, a family of
(quasi-)Banach spaces, introduced by Feichtinger in \cite{Fe1}.
A \emph{weight} on $\rr d$
is a function $\omega \in L^\infty _{loc}(\rr d)$ such that $\omega (x)>0$ for every
$x\in \rr d$ and $1/\omega \in L^\infty _{loc}(\rr d)$. The weight $\omega$ on $\rr d$
is called moderate of polynomial type, if there is an integer $N\ge 0$ such that
$$
\omega (x+y)\lesssim \omega (x)(1+|y|)^N,\qquad x,y\in \rr d .
$$
The set of moderate weights of polynomial type on $\rr d$
is denoted by $\mascP (\rr d)$.

\par

Let $p,q\in (0,\infty ]$, $\phi \in
\mascS (\rr d)\setminus 0$ and $\omega \in \mascP (\rr {2d})$ be fixed.
Then the modulation space, $M^{p,q}_{(\omega )}(\rr {d})$ consists of
all $f\in \mascS '(\rr d)$ such that
$$
\nm f{M^{p,q}_{(\omega )}} \equiv \nm {V_\phi f \cdot \omega}{L^{p,q}}
$$
is finite. Here $V_\phi f$ is the short-time Fourier transform of $f$ with respect
to $\phi$, given by
$$
V_\phi f(x,\xi ) = (2\pi )^{-\frac d2}\scal f{e^{i\scal \cdo \xi}\overline{\phi (\cdo -x)}}
$$
and
$$
\nm F{L^{p,q}} = \nm F{L^{p,q}(\rr {2d})} \equiv \nm {g_F}{L^q(\rr d)}
\quad \text{when}\quad
g_F(\xi ) = \nm {F(\cdo ,\xi )}{L^p(\rr d)}
$$
and $F$ is measurable on $\rr {2d}$.

\par

Modulation spaces possess several convenient properties,
For example we have the following proposition (see \cite{Fe1,GaSa} for proofs).

\par

\begin{prop}
Let $p,q\in (0,\infty ]$ and $\omega \in \mascP (\rr {2d})$. Then the following
is true:
\begin{enumerate}
\item $M^{p,q}_{(\omega)}(\rr d)$ is a quasi-Banach space under the quasi-norm
$\nm \cdo{M^{p,q}}$ above. If in addition $p,q\ge 1$, then $\nm \cdo{M^{p,q}}$
is a norm and $M^{p,q}_{(\omega)}(\rr d)$ is a Banach space;

\vrum

\item the definition of $M^{p,q}(\rr d)$ is independent of the choice of $\phi$
above and different $\phi \in \mascS (\rr d)\setminus 0$ gives rise to equivalent
quasi-norms;

\vrum

\item $M^{p,q}_{(\omega )}(\rr d)$ increases with $p$ and $q$
(also in topological sense).
\end{enumerate}
\end{prop}

\par

\section{Characterizations of $\maclH _{\flat _\sigma}(\rr d)$
and $\maclH _{0,\flat _\sigma} (\rr d)$ in terms of powers of
the harmonic oscillator}\label{sec2}

\par

In this section we deduce characterizations of the test function spaces
$\maclH _{0,\flat \sigma}(\rr d)$ and $\maclH _{\flat _\sigma}(\rr d)$.

\par

More precisely we have the following.

\par

\begin{thm}\label{Thm:Mainthm1}
Let $\sigma >0$, $N,N_0\in \mathbf N$ be such that $N_0\sigma >1$,
$p_0\in [1,\infty ]$, $p,q\in (0,\infty ]$,
$\omega \in \mascP (\rr {2d})$ and let $f\in C^\infty (\rr d)$ be given
by \eqref{Eq:fHermite}. Then the following conditions are equivalent:
\begin{enumerate}
\item $f\in \maclH _{\flat _\sigma}(\rr d)$ ($f\in \maclH _{0,\flat _\sigma}(\rr d)$);

\vrum

\item for some $r > 0$ (for every $r > 0$) it holds
$$
\{ c_\alpha (f)r^{-|\alpha |}(\alpha !)^{\frac{1}{2\sigma}} \} _{\alpha \in \nn d}
\in \ell ^q(\nn d)\text ;
$$

\vrum

\item for some $r > 0$ (for every $r > 0$) it holds
\begin{equation}\label{Eq:GFHarmCond}
\norm {H_d^N f}_{L^{p_0}} \lesssim 2^N r^{\frac{N}{\log (N\sigma )}}
\left ( \frac{2N\sigma}{\log (N\sigma )} \right )^{N ( 1-\frac{1}{\log (N\sigma )} ) },
\quad N  \ge N_0 \text ;
\end{equation}

\vrum

\item for some $r > 0$ (for every $r > 0$) it holds
\begin{equation}\label{Eq:GFHarmCond2}
\norm {H_d^N f}_{M^{p,q}_{(\omega )}} \lesssim 2^N r^{\frac{N}{\log (N\sigma )}}
\left ( \frac{2N\sigma}{\log (N\sigma )} \right )^{N ( 1-\frac{1}{\log (N\sigma )} ) },
\quad N  \ge N_0.
\end{equation}
\end{enumerate}
\end{thm}

\par

We need some preparations for the proof.
In the following proposition we treat separately the equivalence between (3)
and (4) in Theorem \ref{Thm:Mainthm1}.

\par

\begin{prop}\label{Prop:NormEquiv}
Let $p_0\in [1,\infty ]$, $p,q\in (0,\infty ]$, $\sigma >0$ $N_0>\sigma ^{-1}$
be an integer and let $\omega \in \mascP (\rr {2d})$. Then the following conditions
are equivalent:
\begin{enumerate}
\item \eqref{Eq:GFHarmCond} holds for some $r>0$ (for every $r>0$);

\vrum

\item  \eqref{Eq:GFHarmCond2} holds for some $r>0$ (for every $r>0$).
\end{enumerate}
\end{prop}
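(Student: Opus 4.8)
The plan is to show that each of the two conditions \eqref{Eq:GFHarmCond1} and \eqref{Eq:GFHarmCond2} is equivalent to the single Hermite coefficient estimate $|c_\alpha (f)|\lesssim r^{|\alpha |}(\alpha !)^{-\frac 1{2\sigma }}$ (in the corresponding ``for some $r>0$'' respectively ``for every $r>0$'' formulation), which then gives the asserted equivalence. The starting point is the spectral identity $H_dh_\alpha =(2|\alpha |+d)h_\alpha $, so that $H_d^Nf=\sum _\alpha (2|\alpha |+d)^Nc_\alpha (f)h_\alpha $, together with the elementary facts that in each of the spaces $L^{p_0}(\rr d)$ and $M^{p,q}_{(\omega )}(\rr d)$ (with $\omega \in \mascP (\rr {2d})$) the Hermite functions satisfy $\nmm{h_\alpha }\lesssim \langle \alpha \rangle ^M$, and that the coefficient $c_\alpha (f)$ is recovered from $f$ by a pairing with $h_\alpha $ obeying $|c_\alpha (f)|\lesssim \langle \alpha \rangle ^M\nm fX$, for some $M=M(p,q,\omega ,d)\ge 0$; these follow from the Schwartz seminorm estimates of $h_\alpha $ and the explicit Laguerre-type form of the short-time Fourier transform of $h_\alpha $ against a Gaussian window.

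For the direction from a coefficient estimate to a norm estimate one writes, with $X$ one of the two spaces and $b_N$ the right-hand side of \eqref{Eq:GFHarmCond1},
$$
\nm{H_d^Nf}{X}\le \sum _\alpha (2|\alpha |+d)^N|c_\alpha (f)|\,\nm{h_\alpha}{X}
\lesssim \sum _\alpha (2|\alpha |+d)^N r^{|\alpha |}(\alpha !)^{-\frac 1{2\sigma }}\langle \alpha \rangle ^M .
$$
Since $(\alpha !)^{-\frac 1{2\sigma }}$ decays faster than any geometric sequence in $|\alpha |$, this series converges and, after replacing $r$ by a slightly larger $r_0$, is bounded by a constant times $\sup _{n\ge 0}r_0^n\,n^N(n!)^{-\frac 1{2\sigma }}$; a Stirling computation shows that this supremum is comparable to $b_N$, the maximum being attained near $n\approx 2N\sigma /\log (N\sigma )$, which is the source of the exponents in $b_N$. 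Conversely, from $\nm{H_d^Nf}{X}\le C\,b_N$ one obtains $(2|\alpha |+d)^N|c_\alpha (f)|=|c_\alpha (H_d^Nf)|\lesssim \langle \alpha \rangle ^Mb_N$ for all $N$ (for $N\ge N_0$ in case \eqref{Eq:GFHarmCond2}); optimising over $N$ — the dual computation, in which for each $\alpha $ one chooses $N$ so that $2|\alpha |+d$ is the maximiser for $b_N$ — yields $|c_\alpha (f)|\lesssim \langle \alpha \rangle ^M\inf _Nb_N(2|\alpha |+d)^{-N}\lesssim \widetilde r^{\,|\alpha |}(\alpha !)^{-\frac 1{2\sigma }}$. \emph{Alternatively}, the two norms of $H_d^Nf$ can be compared directly: $H_d$ is invertible with $H_d^{-1}$ globally regularising on both $L^{p_0}(\rr d)$ and $M^{p,q}_{(\omega )}(\rr d)$, the modulation norm is squeezed between weighted norms controllable by finitely many $x^\gamma D^\delta $ in $L^2$, and each such $x^\gamma D^\delta $ equals $H_d^k$ modulo lower order terms; this gives $\nm{H_d^Nf}{X}\lesssim \sum _{j\le N+k}\nm{H_d^jf}{L^{p_0}}$ and, symmetrically, the reverse bound, reducing matters once more to the sequence $b_N$.

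In either route the remaining work is to check that the various slacks — the polynomial factors $\langle \alpha \rangle ^M$, the offset $N\ge N_0$ (which affects only finitely many indices), the shift $N\mapsto N+k$, and the summation over the $\langle n\rangle ^{d-1}$ multi-indices with $|\alpha |=n$ — are absorbed into the freedom in $r$ in the Roumieu case, and leave the ``for every $r$'' quantifier intact in the Beurling case. The relevant monotonicity is that $b_N$ is increasing with $b_{N+1}/b_N\asymp 2N\sigma /\log (N\sigma )$, so that $\sum _{j\le N+k}b_j\asymp b_{N+k}$ while $b_{N+k}/b_N$ grows only sub-exponentially in $N$ — precisely the rate absorbed by replacing $r$ with $2r$ in the factor $r^{N/\log (N\sigma )}$. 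The one genuinely delicate point is this bookkeeping around the unusual exponent $N(1-\frac 1{\log (N\sigma )})$: one must verify that the Stirling asymptotics of $\sup _nr_0^nn^N(n!)^{-\frac 1{2\sigma }}$ reproduce $b_N$ up to constants and an adjustment of $r$, and that this associated sequence is stable under index shifts. The functional-analytic ingredients — mapping properties of $H_d^{\pm k}$, embeddings of modulation spaces, and polynomial bounds on Hermite functions — are by comparison routine.
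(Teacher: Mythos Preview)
Your proposal is correct, and in fact the paper follows what you call the \emph{alternative} route: a direct comparison of the norms of $H_d^Nf$ via functional-analytic embeddings, without passing through the Hermite coefficient condition at all. Concretely, the paper uses the bijectivity of $H_d^{N}\colon M^{p,q}_{(v_N\omega)}\to M^{p,q}_{(\omega)}$ (with $v_N(x,\xi)=(1+|x|^2+|\xi|^2)^N$) to show that \eqref{Eq:GFHarmCond2} is independent of the threshold $N_0$ and of the weight $\omega\in\mascP(\rr{2d})$, then invokes the embeddings $M^\infty_{(v_N\omega)}\subseteq M^{p,q}_{(\omega)}\subseteq M^\infty_{(\omega)}$ to handle the quasi-Banach range $p,q\in(0,1)$, and finally the sandwich $M^{p_0,q_1}\subseteq L^{p_0}\subseteq M^{p_0,q_2}$ to pass between (1) and (2). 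The index-shift stability you correctly flag as the delicate point --- that replacing $N$ by $N+2N_0$ in the right-hand side $b_N$ is absorbed by adjusting $r$ --- is isolated as a separate elementary lemma (Lemma~\ref{Lemma:ExpressionLogExps}) bounding the ratios $r^{t_2/\log t_2}/r^{t_1/\log t_1}$ and $(2t_2/\log t_2)^{t_2(1-1/\log t_2)}/(2t_1/\log t_1)^{t_1(1-1/\log t_1)}$ for $0\le t_2-t_1\le R$.

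Your \emph{main} route --- showing that each of \eqref{Eq:GFHarmCond1} and \eqref{Eq:GFHarmCond2} is separately equivalent to the coefficient bound $|c_\alpha(f)|\lesssim r^{|\alpha|}(\alpha!)^{-1/(2\sigma)}$ --- is also valid, but within the paper's architecture it amounts to proving Propositions~\ref{Prop:CoeffGivesHarmPowEst} and~\ref{Prop:HarmPowGivesCoeffEst} (and their supporting Lemmas~\ref{Lemma:CoeffGivesHarmPowEst} and~\ref{Lemma:EstOnExFracFunc}) directly for general $L^{p_0}$ and $M^{p,q}_{(\omega)}$ rather than just for $L^2$. The paper instead uses Proposition~\ref{Prop:NormEquiv} purely as a norm-invariance reduction, so that the Stirling-type optimisation of $\sup_n r_0^n n^N(n!)^{-1/(2\sigma)}$ and its dual $\inf_N b_N(2|\alpha|+d)^{-N}$ need only be carried out once, in $L^2$. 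Your approach buys self-containment and avoids citing the modulation-space isomorphism for $H_d^N$; the paper's buys modularity and a shorter proof of the proposition itself, at the cost of that external input.
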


\par

We need the following lemma for the proof of Proposition \ref{Prop:NormEquiv}.

\par

\begin{lemma}\label{Lemma:ExpressionLogExps}
Let $R\ge e$, $I = (0,R]$,
\begin{align*}
g(r,t_1,t_2)
&\equiv
\frac
{r^{\frac {t_2}{\log t_2}}}{r^{\frac {t_1}{\log t_1}}}
\quad \text{and}\quad
h(t_1,t_2)
\equiv
\frac
{
\left (
\frac {2t_2}{\log t_2}
\right )
^{t_2(1-\frac 1{\log t_2})}
}
{
\left (
\frac {2t_1}{\log t_1}
\right )
^{t_1(1-\frac 1{\log t_1})}
},
\end{align*}
when $t_1,t_2>e$ and $r>0$.
Then
\begin{equation}\label{Eq:aAndhEst}
0 \le g(r,t_1,t_2)\le C
\quad \text{and} \quad
0\le h(t_1,t_2)\le \left (\frac{2t_1}{\log t_1} \right )^C
\end{equation}
when
$$
t_1,t_2>R,\ 0\le t_2-t_1 \le R, \ r\in I,
$$
for some constant $C>0$ which only depends on $R$.
\end{lemma}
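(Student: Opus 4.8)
The plan is to treat the two estimates in \eqref{Eq:aAndhEst} separately, reducing each to an elementary bound on the relevant scalar function after taking logarithms. First I would record the basic monotonicity facts that will be used repeatedly: for $t > e$ the function $t \mapsto t/\log t$ is increasing, and for $t_1, t_2$ in the stated range $t_1, t_2 > R+1$ with $0 \le t_2 - t_1 \le R$ one has $\log t_1 \asymp \log t_2$ and $t_1 \asymp t_2$ with constants depending only on $R$; in particular $t_2/\log t_2 - t_1/\log t_1$ is bounded by a constant $c(R)$. This last point is the crux: by the mean value theorem applied to $\varphi(t) = t/\log t$ on $[t_1,t_2]$, we get $\varphi(t_2) - \varphi(t_1) = \varphi'(\xi)(t_2 - t_1)$ with $\xi \in (t_1, t_2)$, and $\varphi'(\xi) = (\log \xi - 1)/(\log \xi)^2 \le 1$ for $\xi > e$, so the difference is at most $R$. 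For $g$, write $\log g(r,t_1,t_2) = (\varphi(t_2) - \varphi(t_1)) \log r$. Since $r \in (0,R]$ we have $\log r \le \log R$, and combining with $0 \le \varphi(t_2) - \varphi(t_1) \le R$ yields $\log g \le R \log R$ when $r \ge 1$, while for $r < 1$ the exponent is nonnegative and $\log r < 0$ forces $g \le 1$; hence $g \le \max(1, R^R) =: C_1$, and positivity of $g$ is immediate.

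For the bound on $h$, I would take logarithms:
$$
\log h(t_1,t_2) = t_2\Bigl(1 - \tfrac{1}{\log t_2}\Bigr)\log\!\Bigl(\tfrac{2t_2}{\log t_2}\Bigr) - t_1\Bigl(1 - \tfrac{1}{\log t_1}\Bigr)\log\!\Bigl(\tfrac{2t_1}{\log t_1}\Bigr),
$$
and the target is to show this is $\le \tfrac{t_2}{\log t_2}\log C$ for a suitable $C = C(R)$, i.e. that the difference above, multiplied by $\log t_2 / t_2$, stays bounded. Introduce $\psi(t) = t(1 - 1/\log t)\log(2t/\log t)$ and apply the mean value theorem: $\psi(t_2) - \psi(t_1) = \psi'(\xi)(t_2 - t_1) \le R\,\psi'(\xi)$ for some $\xi \in (t_1,t_2)$. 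A direct differentiation gives $\psi'(t) = (1 - 1/\log t)\log(2t/\log t) + O(1)$ as $t \to \infty$ (the $O(1)$ coming from the derivative of the $1/\log t$ factor and from $t \cdot \frac{d}{dt}\log(2t/\log t) = 1 - 1/\log t$), so $\psi'(\xi) \lesssim \log \xi \lesssim \log t_2$, with the implied constants depending only on $R$ via the comparability $\xi \asymp t_2$, $\log\xi \asymp \log t_2$. Therefore $\log h(t_1,t_2) \lesssim_R \log t_2 \le \tfrac{t_2}{\log t_2}\cdot \tfrac{(\log t_2)^2}{t_2}\log t_2$; since $(\log t_2)^2/t_2$ is bounded on $t_2 > R+1$ (indeed it tends to $0$), we obtain $\log h(t_1,t_2) \le \tfrac{t_2}{\log t_2}\log C_2$ for a constant $C_2 = C_2(R)$, which is the desired bound with $C = \max(C_1, C_2)$. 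Positivity of $h$ is clear.

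The main obstacle is the bookkeeping in the $h$-estimate: one must be careful that all the "$O(1)$" and "$\lesssim$" constants genuinely depend only on $R$ and not on $t_1, t_2$ individually, which is guaranteed precisely because the constraint $0 \le t_2 - t_1 \le R$ together with $t_1, t_2 > R+1$ pins down the ratios $t_1/t_2$ and $\log t_1/\log t_2$ into a compact range determined by $R$. A cleaner alternative that avoids differentiating $\psi$ is to estimate $\psi(t_2) - \psi(t_1)$ directly by writing $t_2 = t_1 + \delta$ with $\delta \in [0,R]$ and expanding each factor, using $\log(t_1 + \delta) = \log t_1 + \log(1 + \delta/t_1)$ with $\delta/t_1 \le R/(R+1) < 1$; either route works, but the mean value theorem version is the shortest to write and I would adopt it.
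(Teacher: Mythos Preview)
Your argument is correct and runs parallel to the paper's, though packaged differently. For $g$, both you and the paper split into $r\le 1$ (where monotonicity of $t\mapsto t/\log t$ gives $g\le 1$) and $r>1$; you invoke the mean value theorem on $\varphi(t)=t/\log t$ to get $\varphi(t_2)-\varphi(t_1)\le R$, while the paper obtains the slightly sharper $\varphi(t_2)-\varphi(t_1)<u/\log t_1$ by direct algebraic manipulation --- either bound suffices. For $h$, the paper first discards the positive term
\[
b(t_1,t_2)=\frac{t_2}{\log t_2}\log\!\frac{2t_2}{\log t_2}-\frac{t_1}{\log t_1}\log\!\frac{2t_1}{\log t_1}
\]
and then expands the remaining difference $t_2\log(2t_2/\log t_2)-t_1\log(2t_1/\log t_1)$ by hand, writing $t_2=t_1+u$ and using $\log(1+x)\le x$, to reach $\log h\lesssim \log t_1$; your mean value theorem on $\psi$ reaches the same conclusion in a single stroke and is arguably cleaner. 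Both proofs then finish by the observation that $\log t_2\lesssim t_2/\log t_2$ since $(\log t_2)^2/t_2$ is bounded.

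One typographical slip to fix: in your chain ``$\log t_2 \le \tfrac{t_2}{\log t_2}\cdot \tfrac{(\log t_2)^2}{t_2}\log t_2$'' the trailing $\log t_2$ is spurious; what you intend is the identity $\log t_2 = \tfrac{t_2}{\log t_2}\cdot \tfrac{(\log t_2)^2}{t_2}$, after which boundedness of $(\log t_2)^2/t_2$ on $t_2>R+1$ yields the desired estimate.
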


\par

\begin{proof}
Since $t\mapsto \frac t{\log t}$ is increasing when $t\ge e$, $g$
is upper bounded by one when $r\le 1$, and the boundedness of $g$ follows in this case.

\par

If $r\ge 1$, $t=t_1$, $u =t_2-t_1>0$ and $\rho =\log r$, then
\begin{multline*}
0\le \log g(r,t_1,t_2)
=
\left (
\frac {t+u}{\log (t+u )}-\frac t{\log t}
\right )
\rho
\\[1ex]
=
\frac t{\log t}
\left (
\frac {1+\frac u t}{1+\frac {\log (1+\frac u t)}{\log t}}
-1
\right )
\rho
=
\frac t{\log t}
\left (
\frac
{\frac u t - \frac {\log (1+\frac u t)}{\log t}}
{1+\frac {\log (1+\frac u t)}{\log t}}
\right )
\rho
\\[1ex]
<
\frac t{\log t} \cdot \frac u t \cdot \rho
=
\frac {u \rho}{\log t} \le C
\end{multline*}
for some constant $C$ which only depends on $r$ and $R$. This shows
the boundedness of $g$.

\par

Next we show the estimates for $h(t_1,t_2)$ in \eqref{Eq:aAndhEst}. By taking the
logarithm of $h(t_1,t_2)=h(t,t_2)$ we get
$$
\log h(t,t_2)=t_2
\log
\left (
  \frac{2t_2}{\log t_2}
  \right )
  -t \log
  \left (
  \frac{2t}{\log t}
  \right )
- b(t,t_2),
$$
where
$$
b(t,t_2)
=
\left (
\frac{t_2}{\log t_2}
\log
\left (
\frac{2t_2}{\log t_2}
\right )
-\frac t{\log t} \log
\left (
\frac{2t}{\log t}
\right )
\right ).
$$
Since $b(t,t_2)>0$ when $t_2>t$, we get
\begin{multline*}
\log h(t_1,t_2)<
t_2
\log
\left (
  \frac{2t_2}{\log t_2}
\right )
-t \log
\left (
\frac{2t}{\log t}
\right )
\\[1ex]
=(t+u )
\left (
\log
\left (
\frac{2t}{\log t}
\right )
+\log\left (
\frac{1+\frac{u}{t}}
{1+\frac{\log (1+\frac{u}{t})}{\log t}}
\right )
\right )
-t\log
\left (
\frac{2t}{\log t}
\right )
\\[1ex]
\le
u \log
\left (
\frac{2t}{\log t}
\right )
+t \log
\left (
1+\frac{u}{t}
\right )
+C
\\[1ex]
\le
u \log
\left (
\frac{2t}{\log t}
\right )
+u +C
\end{multline*}
for some constant $C\ge 0$. Here we have used that $t_1,t_2>R\ge e$
and the fact that $t\mapsto \frac t{\log t}$ increases for $t\ge R$.
\end{proof}

\par

\begin{proof}[Proof of Proposition \ref{Prop:NormEquiv}]
First we prove that \eqref{Eq:GFHarmCond2} is independent of $N_0>\sigma ^{-1}$
when $p,q\ge 1$. Evidently, if \eqref{Eq:GFHarmCond2} is true for $N_0$,
then it is true for any larger replacement of $N_0$. On the other hand, the map
\begin{equation}\label{HarmonicOscModMap}
H_d^N \, :\, M^{p,q}_{(v_N\omega )}(\rr d)\to M^{p,q}_{(\omega )}(\rr d),
\qquad v_N(x,\xi )=(1+|x|^2+|\xi |^2)^N,
\end{equation}
and its inverse are continuous and bijective (cf. e.{\,}g.
\cite[Theorem 3.10]{SiTo}). Hence, if $\sigma ^{-1}< N_1\le N_0$,
$N_2=N_0-N_1\ge 0$ and \eqref{Eq:GFHarmCond2} holds for
$N_0$, then
$$
\nm {H_d^{N_1}f}{M^{p,q}_{(\omega )}}
\lesssim
\nm {H_d^{N_0}f}
{M^{p,q}_{(\omega /v_{N_2})}}
\lesssim
\nm {H_d^{N_0}f}{M^{p,q}_{(\omega )}}<\infty ,
$$
and \eqref{Eq:GFHarmCond2} holds for $N_1$. This implies that
\eqref{Eq:GFHarmCond2} is independent of $N_0>\sigma ^{-1}$ when $p,q\ge 1$.

\par

Next we prove that \eqref{Eq:GFHarmCond2} is independent of the
choice of $\omega \in \mascP (\rr {2d})$. By the first part of the proof, we may
assume that $N_0\sigma >e$.
For every $\omega _1,\omega _2\in \mascP (\rr {2d})$, we may find an integer
$N_0\ge 0$ such that
$$
\frac 1{v_{N_0}}\lesssim \omega _1, \omega _2\lesssim v_{N_0},
$$
and then
\begin{equation}\label{NormArrayCond}
\nm f{M^{p,q}_{(1/v_{N_0})}}\lesssim \nm f{M^{p,q}_{(\omega _1)}},
\nm f{M^{p,q}_{(\omega _2)}}
\lesssim \nm f{M^{p,q}_{(v_{N_0})}}.
\end{equation}
Hence the stated invariance follows if we prove that
\eqref{Eq:GFHarmCond2} holds for $\omega =v_{N_0}$, if it is true for
$\omega =1/v_{N_0}$.

\par

Therefore, assume that \eqref{Eq:GFHarmCond2} holds for $\omega =1/v_{N_0}$.
Let $f_N=H_d^{N}f$, $u =2N_0\sigma$, $t=t_1=N\sigma$, $N_2=N+2N_0$ and
$t_2=t_1+u = N_2\sigma$.
If $N\ge 2N_0$, then the bijectivity of \eqref{HarmonicOscModMap} gives
\begin{multline} \label{Eq:Comp}
  \frac{\nm {f_N}{M^{p,q}_{(v_{N_0})}}^\sigma}
  {2^{N\sigma}
  r^{\frac {N\sigma}{\log (N\sigma)}}
\left (
\frac {2N\sigma}{\log (N\sigma)}
\right )
^{N\sigma(1-\frac 1{\log (N\sigma )})}}
=
\frac{\nm {f_N}{M^{p,q}_{(v_{N_0})}}^\sigma}
{{2^t
  r^{\frac t{\log t}}
\left (
\frac {2t}{\log t}
\right )
^{t(1-\frac 1{\log t})}}}
\\[1ex]
\lesssim
\frac{\nm {f_{N+2N_0}}{M^{p,q}_{(1/v_{N_0})}}^\sigma}
{{2^t
  r^{\frac t{\log t}}
\left (
\frac {2t}{\log t}
\right )
^{t(1-\frac 1{\log t})}}}
\\[1ex]
=
2^{u}
g(r,t_1,t_2)h(t_1,t_2)
\cdot
\frac{\nm {f_{N_2}}{M^{p,q}_{(1/v_{N_0})}}^\sigma}
{2^{N_2\sigma}r^{\frac {t_2}{\log (t_2)}}
\left (
\frac {2t_2}{\log t_2}
\right )
^{t_2(1-\frac 1{\log t_2})}},
\end{multline}
where $g(r,t_1,t_2)$ and $h(t_1,t_2)$ are the same as in Lemma
\ref{Lemma:ExpressionLogExps}. A combination of Lemma
\ref{Lemma:ExpressionLogExps}, \eqref{Eq:Comp} and the fact that
$N\sigma >e$ shows that
(2) is independent of $\omega \in \mascP (\rr {2d})$.
For general
$p,q>0$, the invariance of \eqref{Eq:GFHarmCond2} with respect to $\omega$,
$p$ and $q$ is a consequence of the embeddings
$$
M^\infty _{(v_N\omega )}(\rr d)\subseteq M^{p,q} _{(\omega )}(\rr d)
\subseteq M^\infty _{(\omega )}(\rr d),
\qquad
N>
d\left (
\frac 1p +\frac 1q
\right )
$$
(see e.{\,}g. \cite[Theorem 3.4]{GaSa} or \cite[Proposition 3.5]{Toft13}).

\par

The equivalence between (1) and (2) now follows from
these invariance properties and the continuous
embeddings
$$
M^{p_0,q_1}\subseteq L^{p_0}\subseteq M^{p_0,q_2},
\qquad
q_1=\min (p_0,p_0'),\quad q_2=\max (p_0,p_0'),
$$
which can be found in e.{\,}g. \cite[Proposition 1.7]{Toft8}.
\end{proof}

\par

\begin{prop}\label{Prop:CoeffGivesHarmPowEst}
Let $f\in C^\infty (\rr d)$ and $\sigma >0$. If
\begin{equation}\label{Eq:CoeffGivesHarmPowEst1}
\nm {H_d^Nf}{L^2}\lesssim 2^Nr^{\frac N{\log (N\sigma)}}
\left (
\frac {2N\sigma}{\log (N\sigma)}
\right )
^{N(1-\frac 1{\log (N\sigma )})},\quad
N\in \mathbf N,\ N\sigma \ge e,
\end{equation}
for some $r>0$ (for every $r>0$), then
\begin{equation}\label{Eq:CoeffGivesHarmPowEst2}
|c_\alpha (f)|
\lesssim
r^{|\alpha|}|\alpha |^{-\frac {|\alpha |}{2\sigma}},
\quad
\alpha \in \nn d,
\end{equation}
for some $r>0$ (for every $r>0$).
\end{prop}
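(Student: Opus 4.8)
The plan is to recover information about the Hermite coefficients $c_\alpha(f)$ from the $L^2$ bounds on $H_d^N f$ by diagonalizing $H_d$ in the Hermite basis. Recall that $H_d h_\alpha = (2|\alpha|+d) h_\alpha$, so that $H_d^N f = \sum_\alpha (2|\alpha|+d)^N c_\alpha(f) h_\alpha$, and by orthonormality of the $h_\alpha$ in $L^2$,
$$
\nm{H_d^N f}{L^2}^2 = \sum_{\alpha \in \nn d} (2|\alpha|+d)^{2N} |c_\alpha(f)|^2.
$$
In particular, for each fixed $\alpha$,
$$
(2|\alpha|+d)^N |c_\alpha(f)| \le \nm{H_d^N f}{L^2} \lesssim 2^N r^{\frac{N}{\log(N\sigma)}} \left(\frac{2N\sigma}{\log(N\sigma)}\right)^{N(1-\frac1{\log(N\sigma)})}
$$
for all admissible $N$ (with $N\sigma \ge e$). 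So for every such $N$,
$$
|c_\alpha(f)| \lesssim \left(\frac{2}{2|\alpha|+d}\right)^N r^{\frac{N}{\log(N\sigma)}} \left(\frac{2N\sigma}{\log(N\sigma)}\right)^{N(1-\frac1{\log(N\sigma)})}.
$$
The task then reduces to the elementary, $\alpha$-dependent optimization problem: choose $N = N(|\alpha|)$ so that the right-hand side becomes $\lesssim r_1^{|\alpha|} |\alpha|^{-|\alpha|/(2\sigma)}$ for a suitable $r_1$ (any $r_1$, in the Beurling case, up to absorbing fixed constants).

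The heart of the matter is thus the choice of $N$. Writing $m = |\alpha|$ and ignoring the slowly varying factors $r^{N/\log(N\sigma)}$ and the exponent correction $1-\frac1{\log(N\sigma)}$ for a first heuristic, one wants roughly $\left(\frac{2}{m}\right)^N \left(\frac{2N\sigma}{\log(N\sigma)}\right)^N$ to be small; this behaves like $\left(\frac{4N\sigma}{m\log(N\sigma)}\right)^N$. Taking the logarithm, $N\log\!\big(\tfrac{4N\sigma}{m\log(N\sigma)}\big)$, and differentiating in $N$ suggests the optimal $N$ is of size $m/(e\sigma)$ up to logarithmic factors — more precisely $N \asymp \frac{m}{\sigma \log m}$, since then $N\sigma \asymp m/\log m$, $\log(N\sigma) \asymp \log m$, and $\frac{4N\sigma}{m\log(N\sigma)} \asymp \frac{1}{(\log m)^2}$, giving a factor like $(\log m)^{-2N} = (\log m)^{-2m/(\sigma\log m)} = \exp(-\frac{2m}{\sigma}\cdot\frac{\log\log m}{\log m})$, which is subexponential and of smaller order than $m^{-m/(2\sigma)} = \exp(-\frac{m\log m}{2\sigma})$. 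So one should actually take $N$ somewhat larger — of order $m/\log m$ rather than with the $1/\sigma$ — to extract the dominant $m^{-m/(2\sigma)}$ decay; the precise optimal choice is most naturally expressed by setting $N\sigma$ so that $\log(N\sigma)$ equals a convenient quantity, and then carefully accounting for the correction exponent $1-\frac1{\log(N\sigma)}$ and the factor $r^{N/\log(N\sigma)}$, which contributes only $e^{N\log r/\log(N\sigma)}$, a term of order $e^{O(m/(\log m)^2)}$, hence negligible and responsible for the "for some $r$"/"for every $r$" flexibility.

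The steps I would carry out, in order, are: (i) expand $H_d^N f$ in the Hermite basis and use $L^2$-orthonormality to get the pointwise-in-$\alpha$ inequality $(2|\alpha|+d)^N|c_\alpha(f)| \lesssim (\text{RHS of } \eqref{Eq:CoeffGivesHarmPowEst1})$, valid for every $N$ with $N\sigma \ge e$; (ii) fix $\alpha$ with $|\alpha|$ large and substitute a specific choice $N = N(|\alpha|)$, of order $|\alpha|/\log|\alpha|$, chosen to nearly minimize the resulting bound (for small $|\alpha|$ the estimate $|c_\alpha(f)| \lesssim r^{|\alpha|}|\alpha|^{-|\alpha|/(2\sigma)}$ can be arranged trivially by adjusting the implicit constant, interpreting $|\alpha|^{-|\alpha|/(2\sigma)}$ appropriately at $\alpha = 0$); (iii) take logarithms and estimate the resulting expression using $\log(1+x) \le x$ and the monotonicity of $t\mapsto t/\log t$ — much in the spirit of the computations already done in Lemma \ref{Lemma:ExpressionLogExps} — to show it is $\le |\alpha|\log r_1 - \frac{|\alpha|}{2\sigma}\log|\alpha| + o(|\alpha|\log|\alpha|)$; (iv) conclude \eqref{Eq:CoeffGivesHarmPowEst2}, tracking the Roumieu ("for some $r$") versus Beurling ("for every $r$") dichotomy throughout — in the Beurling case one shows that any prescribed $r_1 > 0$ is achievable because the leftover subexponential factors can be beaten by shrinking nothing essential. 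The main obstacle is step (iii): making the choice of $N$ precise and verifying that the slowly varying correction factors (the exponent $1-\frac1{\log(N\sigma)}$ and the power $r^{N/\log(N\sigma)}$) do not spoil the leading-order asymptotics $|\alpha|^{-|\alpha|/(2\sigma)}$, which requires a somewhat delicate but ultimately routine logarithmic estimate.
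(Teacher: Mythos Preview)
Your overall strategy --- expand $H_d^Nf$ in the Hermite basis, extract the pointwise bound $(2|\alpha|+d)^N|c_\alpha(f)|\le \nm{H_d^Nf}{L^2}$, then optimize over $N$ --- is exactly what the paper does. The paper packages the optimization into a separate lemma (Lemma \ref{Lemma:CoeffGivesHarmPowEst}), but the logic is the same.

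However, your proposed choice $N\asymp |\alpha|/\log|\alpha|$ is wrong by a factor of roughly $(\log|\alpha|)^2$, and with that choice step (iii) cannot succeed. The error is in your heuristic: you discard the exponent correction $1-\tfrac{1}{\log(N\sigma)}$ as ``slowly varying'', but it is not negligible at the scale you need. The term it produces,
\[
-\frac{N}{\log(N\sigma)}\log\!\left(\frac{2N\sigma}{\log(N\sigma)}\right)\ \approx\ -N,
\]
is of the same order as the answer you are trying to reach. If you minimize the uncorrected quantity $\big(\tfrac{4N\sigma}{m\log(N\sigma)}\big)^N$ you are led (as you found) to $N$ of order $m/\log m$, and the resulting bound is only of size $\exp\!\big(-c\,m\log\log m/\log m\big)$, far weaker than the target $m^{-m/(2\sigma)}=\exp\!\big(-\tfrac{m\log m}{2\sigma}\big)$. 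Writing ``take $N$ somewhat larger --- of order $m/\log m$ rather than with the $1/\sigma$'' does not fix this; the order of magnitude is still wrong.

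The correct choice, used in the paper's Lemma \ref{Lemma:CoeffGivesHarmPowEst}, is $t=N\sigma \approx \tfrac12\,|\alpha|\log|\alpha|$ (in the lemma's variables: $x=\log t$, $y=\log s$, and one takes $x=y+\log y-\log 2$). With $t$ of this size one has $\log t\approx \log|\alpha|$ and $\tfrac{2t}{\log t}\approx |\alpha|$, so the two leading contributions $-t\log|\alpha|$ and $t\big(1-\tfrac{1}{\log t}\big)\log\!\big(\tfrac{2t}{\log t}\big)$ cancel to top order, and the surviving term $-t/\log t\cdot\log(2t/\log t)\approx -t\approx -\tfrac12|\alpha|\log|\alpha|$ is precisely what yields $|\alpha|^{-|\alpha|/2}$ (before taking the $1/\sigma$ power). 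Once you substitute this $N$, your steps (iii)--(iv) go through essentially as you describe; the $r$-dependence is indeed handled by the lower-order term $r^{N/\log(N\sigma)}$, and the Roumieu/Beurling dichotomy corresponds to parts (1) and (2) of Lemma \ref{Lemma:CoeffGivesHarmPowEst}.
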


\par

\begin{prop}\label{Prop:HarmPowGivesCoeffEst}
Let $f\in C^\infty (\rr d)$ and $\sigma >0$. If
\eqref{Eq:CoeffGivesHarmPowEst2} holds for some $r>0$
(for every $r>0$), then \eqref{Eq:CoeffGivesHarmPowEst1}
holds for some $r>0$ (for every $r>0$).
\end{prop}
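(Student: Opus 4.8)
The plan is to reduce the statement to the spectral action of $H_d$ on the Hermite basis and then to a single sharp minimization in $N$. The Hermite functions satisfy $H_dh_\alpha=(2|\alpha|+d)h_\alpha$ and form an orthonormal basis of $L^2(\rr d)$, so $H_d^Nf=\sum_\alpha c_\alpha(f)(2|\alpha|+d)^Nh_\alpha$ and, by Parseval, $\nm{H_d^Nf}{L^2}^2=\sum_\alpha|c_\alpha(f)|^2(2|\alpha|+d)^{2N}$. Bounding the full sum below by a single term gives, for every $\alpha$ and every $N$ for which the right-hand side is finite,
\[
|c_\alpha(f)|\,(2|\alpha|+d)^N\le \nm{H_d^Nf}{L^2}.
\]
Inserting the hypothesis \eqref{Eq:CoeffGivesHarmPowEst1} (the $L^2$ bound on $H_d^Nf$), and writing $m=|\alpha|$ and $L=\log(N\sigma)$, yields for every admissible $N$ (those with $N\sigma\ge e$)
\[
|c_\alpha(f)|\le C\,2^N r^{\frac NL}\Big(\tfrac{2N\sigma}{L}\Big)^{N(1-\frac 1L)}(2m+d)^{-N}.
\]

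The heart of the argument is to minimize this over $N$ for each fixed $m$. I choose $N=N_*$ matching the scale exactly, $\tfrac{2N_*\sigma}{\log(N_*\sigma)}=m$; since $t\mapsto \tfrac{2t\sigma}{\log(t\sigma)}$ is increasing for $t\sigma>e$ and sweeps $(2e,\infty)$, such a real $N_*$ exists for all large $m$, and the defining relation gives the clean identity $N_*/L_*=m/(2\sigma)$ with $L_*=\log(N_*\sigma)$. This is precisely the Legendre dual of the optimization in the opposite implication, where for fixed $N$ the dominant Hermite index is $|\alpha|\approx\tfrac{2N\sigma}{\log(N\sigma)}$. Splitting the exponent as $N_*(1-\tfrac1{L_*})=N_*-\tfrac{N_*}{L_*}$, the leading piece $(2N_*\sigma/L_*)^{N_*}=m^{N_*}$ cancels $(2m+d)^{-N_*}\le(2m)^{-N_*}$ together with the prefactor $2^{N_*}$, while the correction $m^{-N_*/L_*}$ pairs with $r^{N_*/L_*}$ to give
\[
|c_\alpha(f)|\le C\,(r/m)^{N_*/L_*}=C\,r^{\frac m{2\sigma}}m^{-\frac m{2\sigma}},
\]
which is exactly \eqref{Eq:CoeffGivesHarmPowEst2} with $r'=r^{1/(2\sigma)}$. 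The point is that the nonstandard exponent $N(1-\tfrac1{\log(N\sigma)})$ is tailored so that this cancellation is exact, producing the sub-factorial decay $m^{-m/(2\sigma)}$ with no loss in the exponent.

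It remains to settle a few routine points. The optimal $N_*$ need not be an integer, so I round to the nearest admissible one; since the displayed bound changes only by a bounded factor when $N$ moves by one near its discrete minimum $N_*$, this costs a constant that is absorbed into $r'$. The finitely many small $m$ (for which $N_*\sigma<e$) are covered by the uniform estimate $|c_\alpha(f)|\le\nm f{L^2}$, where $\nm f{L^2}<\infty$ follows from \eqref{Eq:CoeffGivesHarmPowEst1} at any single admissible $N$ via $(2|\alpha|+d)^N\ge d^N$. The Roumieu/Beurling dichotomy is preserved because $r\mapsto r^{1/(2\sigma)}$ is increasing and onto $(0,\infty)$, so ``for some $r$'' passes to ``for some $r'$'' and ``for every $r$'' to ``for every $r'$''. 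Finally, restricting to the $L^2$ norm entails no loss, since Proposition \ref{Prop:NormEquiv} shows the condition is insensitive to the choice of Lebesgue or modulation norm. I expect the main obstacle to be exactly the sharp minimization: verifying that the logarithmic corrections in \eqref{Eq:CoeffGivesHarmPowEst1} are Legendre-dual to $m^{-m/(2\sigma)}$, so that the exponential main terms cancel identically and leave the claimed bound rather than an extra factor growing in the exponent.
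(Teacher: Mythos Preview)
You have proved the wrong implication. The statement asks you to assume the coefficient decay \eqref{Eq:CoeffGivesHarmPowEst2}, namely $|c_\alpha(f)|\lesssim r^{|\alpha|}|\alpha|^{-|\alpha|/(2\sigma)}$, and from it derive the $L^2$ bound \eqref{Eq:CoeffGivesHarmPowEst1} on $H_d^Nf$. Your argument does the converse: you take \eqref{Eq:CoeffGivesHarmPowEst1} as the hypothesis (``Inserting the hypothesis \eqref{Eq:CoeffGivesHarmPowEst1} (the $L^2$ bound on $H_d^Nf$)\dots''), bound a single Hermite coefficient by $\nm{H_d^Nf}{L^2}$, and then minimize over $N$ to obtain \eqref{Eq:CoeffGivesHarmPowEst2}. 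That is the content of the companion Proposition~\ref{Prop:CoeffGivesHarmPowEst}, not of the proposition you were asked to prove.

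For the correct direction one must go the other way through Parseval: write
\[
\nm{H_d^Nf}{L^2}^2=\sum_{\alpha}(2|\alpha|+d)^{2N}|c_\alpha(f)|^2,
\]
insert the coefficient bound, and control the resulting expression uniformly in $N$. The paper converts the sum to a supremum (replacing $r$ by a slightly larger $r_2$ to absorb the summation), reduces to estimating
\[
\sup_{s\ge 1}\Big(2^{2t}s^{2t}(2r_3e)^s s^{-s}\Big)^{1/\sigma},\qquad t=N\sigma,
\]
and then invokes Lemma~\ref{Lemma:EstOnExFracFunc} (an extension of \cite[Lemma~2]{FeGaTo1}) to show this maximum is bounded by the right-hand side of \eqref{Eq:CoeffGivesHarmPowEst1}. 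Your Legendre-dual heuristic is morally the right picture, but applied in the wrong direction: here the variable to optimize is $s=|\alpha|$ for fixed $N$, not $N$ for fixed $|\alpha|$, and the work lies in checking that the maximum over $s$ reproduces exactly the logarithmic correction $\big(\tfrac{2N\sigma}{\log(N\sigma)}\big)^{N(1-1/\log(N\sigma))}$.
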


\par

For the proofs we need some preparation lemmas.

\par

\begin{lemma}\label{Lemma:LogFuncEst}
Let $\sigma>0$, $\sigma _0 \in [0,\sigma ]$ and let
$$
F(r,t)=\left(
\frac{2t}{\log t}
\right )
^{t( 1-\frac{1}{\log t} )}
r^{\frac{t}{\log t}},
\qquad r\geq 0, \quad t\ge e\cdot \max (1,\sigma ) .
$$
Then
\begin{alignat}{2}
F(r,t) &\leq F(r,t+\sigma _0),& \quad r &\in [1,\infty ),
\label{Eq:LogFuncEst1}
\intertext{and}
F(r,t) &\leq F(r^{1-\frac 1{e}},t+\sigma _0),& \quad r&\in (0,1].
\label{Eq:LogFuncEst2}
\end{alignat}
\end{lemma}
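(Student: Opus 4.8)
\textbf{Proof plan for Lemma \ref{Lemma:LogFuncEst}.}
The plan is to take logarithms and reduce each of \eqref{Eq:LogFuncEst1} and \eqref{Eq:LogFuncEst2} to a monotonicity statement for the single-variable function
$$
\varphi (t)\equiv t\Bigl(1-\frac 1{\log t}\Bigr)\log\Bigl(\frac{2t}{\log t}\Bigr)+\frac t{\log t}\log r,
$$
so that $\log F(r,t)=\varphi (t)$, and $F(r,t)\le F(r,t+\sigma_0)$ is exactly $\varphi (t)\le\varphi (t+\sigma_0)$. Since $\sigma_0\in[0,\sigma]$ is bounded and $t>\sigma(e+1)+e$, it suffices to show that $\varphi$ is nondecreasing on the relevant half-line; the claimed increments then follow by the mean value theorem (or by integrating $\varphi'\ge 0$ from $t$ to $t+\sigma_0$). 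Thus the whole lemma comes down to checking the sign of $\varphi'(t)$, possibly after replacing $r$ by $r^{(e-1)/e}$ in the range $r\le 1$.

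First I would differentiate. Writing $L=\log t$, one gets
$$
\varphi'(t)=\Bigl(1-\frac 1L+\frac 1{L^2}\Bigr)\log\Bigl(\frac{2t}{L}\Bigr)+\Bigl(1-\frac 1L\Bigr)\Bigl(1-\frac 1L\Bigr)+\frac{1-L}{L^2}\cdot\log r
$$
(the middle term coming from $\frac{d}{dt}\log(2t/\log t)=\frac1t-\frac1{tL}=\frac1t(1-\frac1L)$, after multiplying by the coefficient $t(1-\frac1L)$), i.e. schematically
$$
\varphi'(t)=\Bigl(1-\frac 1L+\frac 1{L^2}\Bigr)\log\Bigl(\frac{2t}{L}\Bigr)+\Bigl(1-\frac 1L\Bigr)^2-\frac{L-1}{L^2}\log r .
$$
For $r\ge 1$ the last term is $\le 0$ but the dominant term is the first one: since $t>\sigma(e+1)+e>e$ we have $L>1$, the coefficient $1-\frac1L+\frac1{L^2}$ is bounded below by a positive constant, and $\log(2t/L)\to\infty$; I would show that the positive contribution from $\log(2t/L)$ dominates the term $\frac{L-1}{L^2}\log r$. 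The point is that $\frac{L-1}{L^2}\le\frac1{L}\le 1$ while $\log(2t/L)$ grows like $\log t=L$, so for $t$ beyond the stated threshold (which depends only on $\sigma$, hence on the available size of $\log r$ — here I would use that in the Roumieu case $r$ ranges over a fixed bounded interval, as in Lemma \ref{Lemma:ExpressionLogExps}, so $\log r$ is bounded) we indeed get $\varphi'(t)\ge 0$, giving \eqref{Eq:LogFuncEst1}. For \eqref{Eq:LogFuncEst2}, $r\le 1$ means $\log r\le 0$ so $-\frac{L-1}{L^2}\log r\ge 0$ and the estimate $\varphi(t)\le\varphi(t+\sigma_0)$ with the same base $r$ would follow even more easily; the reason the statement instead weakens the base to $r^{(e-1)/e}$ is presumably to absorb an error term of size $\frac1{e}\cdot\frac t{\log t}|\log r|$ that appears when one does \emph{not} want to invoke a lower bound on $t$ depending on $r$ — so here I would instead compare $F(r,t)$ with $F(r^{(e-1)/e},t+\sigma_0)$ directly: take logs, and show
$$
\varphi_r(t)-\varphi_{r^{(e-1)/e}}(t+\sigma_0)=\bigl(\varphi_r(t)-\varphi_r(t+\sigma_0)\bigr)+\frac{t+\sigma_0}{\log(t+\sigma_0)}\cdot\frac 1e\log r\le 0,
$$
where the first bracket is $\le 0$ by the monotonicity argument applied with $\log r$ replaced by its (bounded, nonpositive) value, and the second term is $\le 0$ since $\log r\le 0$.

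The main obstacle I anticipate is bookkeeping the error terms in the Taylor expansion of $\log\bigl(\frac{2(t+\sigma_0)}{\log(t+\sigma_0)}\bigr)$ around $t$ uniformly in $\sigma_0\in[0,\sigma]$ — i.e. making the constants explicit enough to see that the threshold $t>\sigma(e+1)+e$ really is what is needed, in the spirit of the computation in the proof of Lemma \ref{Lemma:ExpressionLogExps}. Concretely one writes $\log(t+\sigma_0)=L+\log(1+\sigma_0/t)$ and $\frac{2(t+\sigma_0)}{\log(t+\sigma_0)}=\frac{2t}{L}\cdot\frac{1+\sigma_0/t}{1+\log(1+\sigma_0/t)/L}$, expands the logarithms using $0\le\sigma_0/t<1/(e+1)$, and controls the remainder by $\sigma_0/t\le\sigma/t$; the factor $\frac{e-1}{e}$ (equivalently the $\frac1e$ loss) is exactly the slack one gets from the constraint $t>\sigma(e+1)+e$, which forces $\log t>1+$ something and hence $1-1/\log t>1-1/(1+\cdots)$, and I would track this carefully to land the exponent $\frac{e-1}{e}$ rather than a worse constant. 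Everything else is routine one-variable calculus once $\varphi'\ge 0$ is established.
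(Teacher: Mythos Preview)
There is a sign error in your derivative that inverts the two cases. Since $\frac d{dt}\bigl(\frac t{\log t}\bigr)=\frac{\log t-1}{(\log t)^2}=\frac{L-1}{L^2}$, the correct expression is
$$
\varphi'(t)=\Bigl(1-\frac 1L+\frac 1{L^2}\Bigr)\log\Bigl(\frac{2t}{L}\Bigr)+\Bigl(1-\frac 1L\Bigr)^2+\frac{L-1}{L^2}\log r,
$$
with a \emph{plus} sign on the $\log r$ term. For $r\ge 1$ and $L>1$ every summand is nonnegative, so $\varphi'\ge 0$ and \eqref{Eq:LogFuncEst1} follows at once; there is no need to assume $r$ bounded, and the threshold on $t$ does not depend on $r$. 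This is exactly what the paper means by ``straight-forward tests with derivatives''.

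The real difficulty is \eqref{Eq:LogFuncEst2}, and here your plan breaks down. For $0<r\le 1$ the term $\frac{L-1}{L^2}\log r$ is $\le 0$, and for $r$ small it dominates: indeed $t\mapsto r^{t/\log t}$ is \emph{decreasing} since $t/\log t$ is increasing, so $F(r,\cdot)$ need not be monotone and in general $\varphi_r(t)-\varphi_r(t+\sigma_0)>0$. Hence in your decomposition
$$
\varphi_r(t)-\varphi_{r^{(e-1)/e}}(t+\sigma_0)=\bigl(\varphi_r(t)-\varphi_r(t+\sigma_0)\bigr)+\frac{t+\sigma_0}{\log(t+\sigma_0)}\cdot\frac{\log r}{e}
$$
the first bracket cannot be discarded, and you are left having to show that the second (negative) term absorbs it --- which is precisely the content of the lemma.

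The paper avoids this by factoring $F(r,t)=G(t)\, r^{t/\log t}$ with $G(t)=F(1,t)$. The monotonicity already proved (the case $r=1$) gives $G(t)\le G(t_1)$ with $t_1=t+\sigma_0$, so one only has to compare the second factor: it suffices that $r^{t/\log t}\le (r^{(e-1)/e})^{t_1/\log t_1}$, i.e.\ (since $r\le 1$) that
$$
\frac{t}{\log t}\ge \Bigl(1-\frac 1e\Bigr)\frac{t_1}{\log t_1}.
$$
Writing $\frac{t}{\log t}=h(t_1,\sigma_0)\,\frac{t_1}{\log t_1}$ with $h(t_1,\sigma_0)=\dfrac{1-\sigma_0/t_1}{1+\log(1-\sigma_0/t_1)/\log t_1}$, the hypothesis $t>\sigma(e+1)+e$ forces $\sigma_0/t_1\le 1/e$, whence $h(t_1,\sigma_0)\ge 1-\sigma_0/t_1\ge 1-1/e$. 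This is where the exponent $\frac{e-1}{e}$ comes from, and it requires no Taylor expansion bookkeeping.
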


\par

\begin{proof}
If $r\ge 1$, then it follows by straight-forward tests with
derivatives that $F(r,t)$ is increasing with respect to $t\ge e$.
This gives \eqref{Eq:LogFuncEst1}.

\par

In order to prove \eqref{Eq:LogFuncEst2}, let
$t_1=t+\sigma _0$ and
$$
h(t_1,\sigma _0 )
=
\frac{1-\frac{\sigma _0}{t_1}}
{1+\frac{\log (1-\frac{\sigma _0}{t_1})}{\log t_1}},
$$
where $0\leq \sigma _0 \leq \sigma$. Then
\begin{equation}\label{Eq:LogFractionEst}
\left(
\frac{2t}{\log t}
\right)^{t(1-\frac{1}{\log t})}
r^{\frac{t}{\log t}}
\leq
\left(
\frac{2t_1}{\log t_1}
\right)^{t_1(1-\frac{1}{\log t_1})}
r^{\frac{t}{\log t}}
\end{equation}
and
\begin{equation*}
{\frac{2t}{\log t}}=
h(t_1,\sigma _0 )
\cdot \frac{2t_1}{\log t_1}.
\end{equation*}
Since
$$
0\le \frac {\sigma _0}{t_1}\le \frac 1e
\quad \text{and}\quad
-1<\frac {\log (1-\frac {\sigma _0}{t_1})}{\log t_1}\le 0
$$
we get
$$
h(t_1,\sigma _0 ) \ge 1-\frac {\sigma _0}{t_1}\ge 1-\frac 1e.
$$
Hence the facts $\frac {t_1}{\log t_1} \ge 1$ and $0<r\le 1$
give
$$
r^{\frac t{\log t}}= r^{h(t_1,\sigma _0 )\frac {t_1}{\log t_1}}
\le
r^{(1-\frac 1e)\frac {t_1}{\log t_1}}.
$$

\par

A combination of the latter inequality with
\eqref{Eq:LogFractionEst} gives
$$
F(r,t)
\le
\left(
\frac{2t_1}{\log t_1}
\right)^{t_1(1-\frac{1}{\log t_1})}
\left (
r^{(1-\frac 1e)}
\right )
^{\frac{t_1}{\log t_1}}
= F(r^{1-\frac 1e},t_1).
\qedhere
$$
\end{proof}

\par

\begin{lemma}\label{Lemma:CoeffGivesHarmPowEst}
Let $s\ge \sigma (e+1)+e^2$ 
$$
\Omega _1
=
[e,\infty )\cap (\sigma \cdot \mathbf N)
\quad \text{and}\quad
\Omega _2
=
[e,\infty ).
$$
Then the following is true:
\begin{enumerate}
\item for any $r_2>0$, there is an $r_1>0$ such that
\begin{equation}\label{Eq:GeneralEstimate1}
\inf _{t\in \Omega _j}
\left (
s^{-t}
\left (
\frac {2t}{\log t}
\right )
^{t(1-\frac 1{\log t})}r_1^{\frac t{\log t}}
\right )
\lesssim
r_2 ^ss^{-\frac s2},\quad j=1,2\text ;
\end{equation}

\vrum

\item for any $r_1>0$, there is an $r_2>0$ such that
\eqref{Eq:GeneralEstimate1} holds.
\end{enumerate}
\end{lemma}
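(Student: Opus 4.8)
The plan is to bound each infimum in \eqref{Eq:GeneralEstimate1} from above by the value of the bracketed quantity at one well chosen point, and for this the crucial observation is that $t\mapsto \frac{2t}{\log t}$ is strictly increasing on $(e,\infty )$, equals $2e$ at $t=e$, and tends to $\infty$; hence for every $s\ge 10$ (so $s>2e$) there is a unique $t_\ast =t_\ast (s)\in (e,\infty )$ with $\frac{2t_\ast}{\log t_\ast}=s$, and then $\frac{t_\ast}{\log t_\ast}=\frac s2$. Writing
\[
\phi _r(t)=s^{-t}\Bigl (\frac{2t}{\log t}\Bigr )^{t(1-\frac 1{\log t})}r^{\frac t{\log t}}
\]
for the quantity inside the infimum (with $r=r_1$) and substituting $t=t_\ast$, the powers of $s$ cancel and we obtain the exact identity $\phi _r(t_\ast )=s^{-s/2}r^{s/2}=(r^{1/2})^{s}s^{-s/2}$. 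Since $t_\ast\in \Omega _2$, this already proves the case $j=2$ of both (1) and (2): in (1) take $r_1:=r_2^{2}$, in (2) take $r_2:=r_1^{1/2}$, the symbol $\lesssim$ absorbing a multiplicative constant.

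For $j=1$ we must replace $t_\ast$ by a nearby element of $\Omega _1=\sigma \mathbf N\cap [e,\infty )$. Put $t_{\ast\ast}:=\sigma \lceil t_\ast /\sigma \rceil$, so $t_\ast \le t_{\ast\ast}\le t_\ast +\sigma$ and $t_{\ast\ast}\in \Omega _1$, and set $R:=\max (\sigma ,1,r_1)$. For all $s$ above a threshold $S_0=S_0(\sigma ,r_1)$ one has $t_\ast ,t_{\ast\ast}>R+1$, so Lemma \ref{Lemma:ExpressionLogExps} applies with $t_1=t_\ast$, $t_2=t_{\ast\ast}$, $r=r_1$ (after enlarging the constant there so that $C=C(R)\ge 1$), giving
\[
\frac{\phi _{r_1}(t_{\ast\ast})}{\phi _{r_1}(t_\ast )}=s^{\,t_\ast -t_{\ast\ast}}\,h(t_\ast ,t_{\ast\ast})\,g(r_1,t_\ast ,t_{\ast\ast})\le C^{\,t_{\ast\ast}/\log t_{\ast\ast}}\cdot C ,
\]
where we used $s^{t_\ast -t_{\ast\ast}}\le 1$. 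Since $u\mapsto u/\log u$ is increasing on $(e,\infty )$ and $t_\ast /\log t_\ast =s/2$, we get $t_{\ast\ast}/\log t_{\ast\ast}\le (t_\ast +\sigma )/\log t_\ast \le s/2+\sigma$, whence
\[
\inf _{t\in \Omega _1}\phi _{r_1}(t)\le \phi _{r_1}(t_{\ast\ast})\le C^{\sigma +1}(Cr_1)^{s/2}s^{-s/2}=C^{\sigma +1}\bigl ((Cr_1)^{1/2}\bigr )^{s}s^{-s/2}.
\]
In direction (2), given $r_1$, take $r_2:=(Cr_1)^{1/2}$. In direction (1) one may assume $r_2\le 1$ (a smaller $r_2$ gives a formally stronger estimate); then $R=\max (\sigma ,1)$ and $C=C_0$ becomes independent of $r_1$, and $r_1:=r_2^{2}/C_0$ satisfies $r_1\le 1$ and $Cr_1=r_2^{2}$, so the bound is $\lesssim r_2^{s}s^{-s/2}$. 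Picking, in each direction, the smaller of the two values of $r_1$ (resp. the larger of the two values of $r_2$) produced by the cases $j=1,2$ gives a single choice valid for both $j$, since decreasing $r_1$ or increasing $r_2$ only strengthens the remaining estimate.

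There remains the bounded range $s\in [10,S_0]$ (nonempty only when $\sigma$ is large), on which the threshold above may fail. There one bounds $\inf _{t\in \Omega _1}\phi _{r_1}(t)\le \phi _{r_1}(\tau _0)$ with $\tau _0:=\min \Omega _1$ a constant depending only on $\sigma$, and notes that on this compact interval both $\phi _{r_1}(\tau _0)$ and $r_2^{s}s^{-s/2}$ lie between positive constants, so the estimate holds there after enlarging the implied constant; the same remark covers $\Omega _2$. The step requiring the most care is the $j=1$ rounding: one must verify that the exponent $t/\log t$ stays within $O(\sigma )$ of $s/2$, so that the factor $h(t_\ast ,t_{\ast\ast})\le C^{t_{\ast\ast}/\log t_{\ast\ast}}$ costs only an extra $C^{\sigma}C^{s/2}$, the $C^{s/2}$ being absorbed into the base of the $s$-th power; everything else is a routine computation.
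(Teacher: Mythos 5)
Your proof is correct, but it follows a genuinely different route from the paper's. For $j=2$ the paper works on a logarithmic scale: it sets $x=\log t$, $y=\log s$, picks the near-optimal point $x=y+\log y-\log 2$ (i.e.\ $t=\tfrac 12 s\log s$), and shows by Taylor expansion that the resulting expression $F$ can be made negative by adjusting $\rho _1$ or $\rho _2$; you instead solve $\tfrac {2t}{\log t}=s$ exactly (possible since $s\ge 10>2e$ and $t\mapsto t/\log t$ is increasing on $(e,\infty )$), which collapses the bracketed quantity to the identity $s^{-s/2}r_1^{s/2}$ and yields both implications with the explicit relations $r_1=r_2^2$, $r_2=r_1^{1/2}$ and no asymptotic analysis at all. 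For $j=1$ the paper rounds $t$ \emph{down} to a multiple of $\sigma$ and invokes Lemma \ref{Lemma:LogFuncEst} (monotonicity of $F(r,t)$ in $t$ after replacing $r$ by $r^{(e-1)/e}$), absorbing the loss $s^{\sigma}$ through the boundedness of $\rho ^s s^{\sigma}$ with $\rho <1$; you round \emph{up} and instead invoke Lemma \ref{Lemma:ExpressionLogExps}, writing the ratio of the two values as $s^{t_\ast -t_{\ast \ast}}h\, g$, so the rounding only costs $C\cdot C^{s/2+\sigma}$, which you absorb into the geometric base by passing from $r_1$ to $(Cr_1)^{1/2}$; the only delicate point, the dependence of $C$ on $r_1$ in direction (1), you handle correctly by reducing to $r_2\le 1$ so that $R=\max (\sigma ,1)$ is independent of $r_1$, and the finitely many $s\in [10,S_0]$ are dealt with by a compactness remark. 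What your approach buys is an elementary and essentially computation-free treatment of the continuous case with explicit constants; what the paper's buys is that its choice $t=\tfrac 12 s\log s$ needs no solvability discussion and its $j=1$ step produces a loss that is bounded rather than exponential in $s$, so no change of the geometric base is required. Both arguments are complete; I see no gap in yours.
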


\par

\begin{proof}
First prove the result for $j=2$. Let
$$
x=\log t,\quad y=\log s \ge \log (\sigma (e+1)+e^2) >2
\quad
\rho _j=\log r_j,\quad j=1,2.
$$
By applying the logarithm on \eqref{Eq:GeneralEstimate1},
the statements (1) and (2) follow if we prove:
\begin{enumerate}
\item[(1)$'$] for any $\rho _2\in \mathbf R$, there is a
$\rho _1\in \mathbf R$ such that
\begin{equation}\label{Eq:GeneralEstimate2}
\inf _{x\ge x_0}
F(x) \le 0,
\qquad x_0
=
\log (\sigma (e+1)+e^2)
\end{equation}
where
\begin{equation}\label{Eq:FDef}
F(x)=
-e^xy + e^x
\left (
1-\frac 1x
\right )
(x+\log 2-\log x)
+
\rho _1 \frac {e^x}x-\rho _2e^y+\frac {e^yy}2
\end{equation}

\vrum

\item[(2)$'$] for any $\rho _1\in \mathbf R$, there
is a $\rho _2\in \mathbf R$ such that
\eqref{Eq:GeneralEstimate2} holds.
\end{enumerate}

\par

We choose
$$
x=y+\log y -\log 2 \geq \log s \geq x_0
\quad \text{and let}\quad
h=g(y),
$$
where
$$
g(u)=\frac {\log u - \log 2}u.
$$
Obviously, $x$ increases with $y$, and by function investigations
it follows that
$$
0=g(2)<g(u)\le g(2e)=\frac 1{2e},\qquad u>2,
$$
giving that $0<h\le \frac 1{2e}<1$.
Then \eqref{Eq:FDef} becomes
\begin{multline*}
e^{-y}F(y+\log y-\log 2)
\\[1ex]
=
-\frac {y^2}2 +
\frac y2
\left (
1-\frac 1{y+\log \frac y2}
\right )
(y+\log y -\log (y+\log \frac y2))
+\frac {\rho _1y}{2(y+\log \frac y2)}-\rho _2+\frac y2
\\[1ex]
=
-\frac y2\log (1+h)+\frac {\log y +\log (1+h)}{2(1+h)}
+\frac {\rho _1-\log 2}{2(1+h)}-\rho _2 .
\end{multline*}
If $\rho _1\in \mathbf R$ is fixed, then we choose
$\rho _2\in \mathbf R$ such that
\begin{equation}\label{Eq:ChoosingRho}
\frac {\rho _1-\log 2}{2(1+h)}-\rho _2\le -C_0
\end{equation}
for some large number $C_0>0$.
In the same way, if $\rho _2\in \mathbf R$ is fixed,
then we choose $\rho _1\in \mathbf R$ such that
\eqref{Eq:ChoosingRho} holds. For such choices
and the fact that $0<h<1$, the inequality
$$
0<h-\frac {h^2}2 \le \log (1+h)\le h
$$
gives
\begin{multline*}
F(y+\log y-\log 2)
\le
e^y
\left (
-\frac y2\log (1+h)+\frac {\log y +\log (1+h)}{2(1+h)}-C_0
\right )
\\[1ex]
\le
e^y
\left (
-\frac y2\log (1+h)+\frac {\log y +\log (1+h)}{2}
-C_0\right )
\\[1ex]
\le
e^y
\left (
-\frac {\log y-\log 2}2+\frac {(\log y-\log 2)^2}{4y}+\frac 12
\left (
\log y +h
\right )
-C_0\right )
\\[1ex]
\le
e^y
\left (
\frac 12\log 2+\frac {(\log y-\log 2)^2}{4y}+\frac h2
-C_0\right )<0,
\end{multline*}
provided $C_0$ was chosen large enough. This gives the result in
the case $j=2$.

\par

Next we prove the result for $j=1$. Let $r_2>0$. By the
first part of the proof, there are $t_1\ge e(\sigma +1) + \sigma$
and $r_0>0$ such that
$$
s^{-t_1}
\left (
\frac {2t_1}{\log t_1}
\right )
^{t_1(1-\frac 1{\log t_1})}r_0^{\frac {t_1}{\log t_1}}
\leq
r_2^ss^{-\frac s2}.
$$
Let $r_1= r_0$ if $r_0\geq 1$ and $r_1=r_0^{\frac{e}{e-1}}$ otherwise.
By Lemma \ref{Lemma:LogFuncEst} it follows that
$$
s^{-t}
\left (
\frac {2t}{\log t}
\right )
^{t(1-\frac 1{\log t})}r_1^{\frac {t}{\log t}}
\leq
r_2^s s^{-\frac s2}
$$
holds when $t=N\sigma$ and $N\in \mathbf N$ is chosen such that
$0\le t_1-N\sigma\le \sigma$. Observe that Lemma \ref{Lemma:LogFuncEst}
can be applied since $N\sigma > e(\sigma +1)$. This gives (1) for $j=1$.

\par

By similar arguments, (2) for $j=1$ follows from
(2) in the case $j=2$. The details are left for the reader.
\end{proof}

\par

\begin{proof}[Proof of Proposition
\ref{Prop:CoeffGivesHarmPowEst}]
Suppose that \eqref{Eq:CoeffGivesHarmPowEst1}
holds for some $r=r_1>0$. By
\begin{equation} \label{c-alpha}
c_\alpha (H_d^Nf) = (2|\alpha |+d)^Nc_\alpha (f),
\quad
|c_\alpha (H_d^Nf)|\le \nm {H_d^Nf}{L^2}
\end{equation}
and \eqref{Eq:CoeffGivesHarmPowEst1} we get
\begin{multline*}
|c_\alpha (f)|
=
\frac {|c_\alpha (H_d^Nf)|}{(2|\alpha |+d)^N}
\\[1ex]
\lesssim
\left (
|\alpha |+\frac d2
\right )^{-N}
r_1^{\frac N{\log (N\sigma )}}
\left (
\frac {2N\sigma}{\log (N\sigma )}
\right )
^{N(1-\frac 1{\log (N\sigma )})}
\\[1ex]
\le
\left (
|\alpha | ^{-N\sigma}
r_1^{\frac{N\sigma}{\log (N\sigma )}}
\left (
\frac {2N\sigma}{\log (N\sigma )}
\right )
^{N\sigma (1-\frac 1{\log (N\sigma )})}
\right )
^{\frac 1\sigma}.
\end{multline*}
By taking the infimum over all $N\ge 0$, it follows from
Lemma \ref{Lemma:CoeffGivesHarmPowEst} (2) that
\begin{equation*}
|c_\alpha (f)|
\lesssim
\left (
r_2^{|\alpha |}|\alpha |^{-\frac {|\alpha |}2}
\right )
^{\frac 1\sigma}
=
r^{|\alpha |}|\alpha |^{-\frac {|\alpha |}{2\sigma}},
\qquad |\alpha |\ge 2\sigma (e+1)+e^2,
\end{equation*}
for some $r_2>0$, where $r=r_2^{\frac 1\sigma}$.
Hence \eqref{Eq:CoeffGivesHarmPowEst2} holds for some
$r>0$.

\par

By similar arguments, using (1) instead of (2) in Lemma
\ref{Lemma:CoeffGivesHarmPowEst}, it follows that if
\eqref{Eq:CoeffGivesHarmPowEst1} holds for every $r>0$,
then \eqref{Eq:CoeffGivesHarmPowEst2} holds for every
$r>0$.
\end{proof}

\par

For the proof of Proposition \ref{Prop:HarmPowGivesCoeffEst}
we will use the following result which is essentially a slight clarification of
\cite[Lemma 2]{FeGaTo1}. The proof is therefore omitted.

\par

\begin{lemma}\label{Lemma:EstOnExFracFunc}
Let $r>0$ and
$$
f(s,t,r) = \frac {s^{2t}(2re)^s}{s^s},\qquad s> 1,\ t\ge 0.
$$
Then there exist a positive
increasing function $\theta$ on $[0,\infty )$ and a
constant $t_0=t_0(r)>e$ which only depends on $r$ such that
\begin{equation}\label{Eq:EstOnExFracFunc1}
\max _{s>0}f(s,t,r)
\le
\left (
\frac {2t}{\log t}
\right )
^{2t(1-\frac 1{\log t})}(\theta (r)r)^{\frac {2t}{\log t}},
\quad t\ge t_0(r)
\text .
\end{equation}
\end{lemma}

\par

\begin{rem}\label{Rem:EstOnExFracFunc}
The constants $s$, $t$ and $t_0(r)$ in Lemma \ref{Lemma:EstOnExFracFunc}
are denoted by $t$, $N$ and $N_0(r)$, respectively in Lemmas 1 and 2 in
\cite{FeGaTo1}. In the latter results it is understood that $N$ and $N_0(r)$
are integers. On the other hand, it is evident from the proofs of these results
that they also hold when $N$ and $N_0(r)$ are allowed to be in $\mathbf R_+$.
\end{rem}

\par

\begin{proof}[Proof of Proposition
\ref{Prop:HarmPowGivesCoeffEst}]
Let $\theta$ be as in Lemma \ref{Lemma:EstOnExFracFunc}
and $\rho \in (0,1)$. Suppose that \eqref{Eq:CoeffGivesHarmPowEst2}
holds for some $r>0$ and let $r_2>r$. From
\eqref{Eq:CoeffGivesHarmPowEst2} and \eqref{c-alpha}
we get
\begin{multline*}
\nm {H_d^Nf}{L^2}^2
=
\sum \limits _{\alpha \in \nn d}
|(2|\alpha|+d)^Nc_\alpha(f)|^2
\\[1ex]
\lesssim
\sup _{ |\alpha |\ge 1}
\left(
(2|\alpha|+d)^{2N} r_2^{2|\alpha|}
|\alpha|^{-\frac{|\alpha|}{\sigma}}
\right)
\\[1ex]
=\sup \limits _{s\ge 1}
\left(
2^{2t}
\left(
s+\frac d{2}
\right)^{2t}
r_2^{2s} s^{-s}
\right)^{\frac 1{\sigma}},
\end{multline*}
where $s=|\alpha|$ and $t=N\sigma$.
Since $0<\rho <1$ we have
\begin{multline*}
s^s=
\left(
s-\frac d{2}+\frac d{2}
\right)^{s-\frac d{2}}
s^{\frac d{2}}
=
\left(
s-\frac d{2}
\right)^{s-\frac d{2}}
s^{\frac d{2}}
\left(
1+\frac d{2s-d}
\right)^{s-\frac d{2}}
\\[1ex]
\le
\left(
s-\frac d{2}
\right)^{s-\frac d{2}}
(se)^{\frac d{2}}
\lesssim
\left(
s-\frac d{2}
\right)^{s-\frac d{2}}
\rho^{-2s}.
\end{multline*}

\par

This gives
\begin{multline} \label{Eq:equ2}
  \nm {H_d^Nf}{L^2}^2
  \lesssim
  \sup \limits_{s\ge 1}
  \left(
  2^{2t}
  \left(
  s+\frac d{2}
  \right)^{2t}
  r_2^{2s} s^{-s}
  \right)^{\frac 1{\sigma}}
  \\[1ex]
  =
  \sup \limits_{s\ge 1+\frac d{2}}
  \left(
  2^{2t} s^{2t} r_2^{2s-d}
  \left(
  s-\frac d{2}
  \right)^{-(s- \frac d{2})}
  \right)^{\frac 1{\sigma}}
  \\[1ex]
  \lesssim
  \sup \limits _{s\ge 1+\frac d{2}}
  \left (
   2^{2t} s^{2t}
   \left(
   \frac{r_2}{\rho}
   \right)^{2s}
   s^{-s}
   \right)^{\frac 1{\sigma}}.
\end{multline}

\par

Using \eqref{Eq:equ2} and Lemma
\ref{Lemma:EstOnExFracFunc} we obtain
\begin{multline*}
\nm {H_d^Nf}{L^2}^2
\lesssim
\sup \limits _{s\ge 1+\frac d{2}}
  \left(
   2^{2t} s^{2t}
   \left(
   \frac{r_2}{\rho}
   \right)^{2s}
   s^{-s}
   \right)^{\frac 1\sigma}
\\[1ex]
=
\sup \limits _{s\ge 1+\frac d{2}}
  \left(
   2^{2t} s^{2t}
   \left(
   2r_3e
   \right)^s
   s^{-s}
   \right)^{\frac 1\sigma}
\\[1ex]
\lesssim
\left (
2^{2t}
\left (
\frac{2t}{\log t}
\right )^{2t(1-\frac 1{\log t})}
\left (
\theta
(r_3)r_3
\right)^{\frac{2t}{\log t}}
\right)^{\frac 1\sigma}
\\[1ex]
=
2^{2N}(r_3\theta (r_3))^{\frac{2N}{\log (N\sigma)}}
\left (
\frac {2N\sigma}{\log (N\sigma)}
\right )
^{2N(1-\frac 1{\log (N\sigma )})}
\end{multline*}
when
$$
r_3 = \frac{r_2^{2}}{2\rho^2e}
\quad \text{and}\quad
N\sigma \ge t_0(r_3).
$$
This gives the result in the Roumieu case.

\par

By similar argument, using the fact that the non-negative
function $\theta$ is increasing, it also follows that
\eqref{Eq:CoeffGivesHarmPowEst1} holds for every $r>0$
when \eqref{Eq:CoeffGivesHarmPowEst2}
holds for every $r>0$, and the result follows.
\end{proof}

\par

\begin{proof}[Proof of Theorem \ref{Thm:Mainthm1}]
We have
\begin{multline*}
\nm {\{ c_\alpha (f)r^{-|\alpha |}(\alpha !)^{\frac{1}{2\sigma}} \}
_{\alpha \in \nn d}}{\ell ^\infty (\nn d)}
\le
\nm {\{ c_\alpha (f)r^{-|\alpha |}(\alpha !)^{\frac{1}{2\sigma}} \}
_{\alpha \in \nn d}}{\ell ^q (\nn d)}
\\[1ex]
\lesssim
\nm {\{ c_\alpha (f)(cr)^{-|\alpha |}(\alpha !)^{\frac{1}{2\sigma}} \}
_{\alpha \in \nn d}}{\ell ^\infty (\nn d)}
\end{multline*}
when $c\in (0,1)$, which shows that (2) is independent of the choice of
$q$. The equivalence between (1) and (2) now follows by the definitions
and choosing $q=\infty$ in (2).

\par

By Proposition \ref{Prop:NormEquiv} we may assume that
$p=2$. The result now follows from Propositions
\ref{Prop:CoeffGivesHarmPowEst} and \ref{Prop:HarmPowGivesCoeffEst},
together with the fact that
$$
(d\cdot e)^{-|\alpha|} |\alpha|^{|\alpha|}
\le \alpha !
\le |\alpha|^{|\alpha|},\quad \alpha \in \nn d. \qedhere
$$
\end{proof}

\par

\end{document}